\documentclass{article}
\usepackage{amssymb,amsmath,amsthm}
\usepackage{graphicx}
\usepackage{array}
\usepackage{xcolor}
\usepackage[normalem]{ulem}
\usepackage{hyperref}

\theoremstyle{plain}
\newtheorem{thm}{Theorem}[section]

\theoremstyle{definition}
\newtheorem{definition}[thm]{Definition}
\newtheorem{remark}[thm]{Remark}

\DeclareMathOperator{\CA}{\!\mathit{CA}}

\title{Arndt and Carlitz Compositions}
\author{Brian Hopkins and Aram Tangboonduangjit}
\date{}

\begin{document}
\maketitle

\begin{abstract}
Carlitz considered integer compositions in which adjacent parts must be unequal.  Arndt recently initiated the study of restricted compositions based on conditions applied to certain pairs of parts rather than to individual parts.  Here, we combine and generalize these notions, establishing enumeration results using both combinatorial proofs and generating functions.  Motivations for our generalizations include the gap-free compositions studied by Hitczenko and Knopfmacher and the Rogers--Ramanujan integer partitions.

Keywords: Integer compositions, Carlitz compositions, Arndt compositions, combinatorial proofs, generating functions

MSC: 05A19, 05A17, 05A15
\end{abstract}

\section{Background}

Given a positive integer $n$, the compositions of $n$ are all ordered sequences of positive integers $(c_1, \ldots, c_t)$ called parts whose sum is $n$.  Write $C(n)$ for the compositions of $n$ and let $c(n) = | C(n)|$; in general, we will use capital letters for sets and the corresponding lower case letters for cardinalities.   Although analysis of integer compositions dates back at least to ancient India \cite{s86}, formal study and consistent terminology began with MacMahon \cite{m93}; he proved, for example, that $c(n) = 2^{n-1}$.  It is convenient to set $c(0) = 1$ for the ``empty composition.''

For purposes of motivation, we will also mention integer partitions which differ in that the order of the parts is disregarded.  Write $P(n)$ for the partitions of $n$.  For example, $C(3) = \{(3), (2,1), (1,2), (1,1,1)\}$ while $P(3) = \{(3), (2,1), (1,1,1)\}$ following the convention that partition parts are listed in nonincreasing order.  

One of the more engaging concepts in the theory of compositions involves the local condition $c_i \ne c_{i+1}$ for each $i$ introduced by Carlitz \cite{c76}; the ensuing research includes \cite{ch07, gh02, k05, kp98, w11}.  Carlitz's condition can be understood as a composition version of partitions with distinct parts, considered by Euler.  Another motivation for our work is gap-free compositions, where $\mid c_i - c_{i+1} \mid \le 1$ for each $i$, studied by Hitczenko and Knopfmacher \cite{hk05}.  We note that both Carlitz and gap-free compositions are treated by the general framework of Bender and Canfield \cite{bc05}.

The other primary ingredient of this project is the program following Arndt of considering restrictions, for each $i$, on pairs of parts $(c_{2i-1}, c_{2i})$ with no restriction between $c_{2i}$ and $c_{2i+1}$.  This began with Arndt's 2013 observation that applying the restriction $c_{2i-1} >  c_{2i}$ gives compositions of $n$ counted by the Fibonacci numbers; the current authors published the first proof of this in 2022 \cite{ht22}.  Variations of Arndt's original restriction and other generalizations have followed \cite{cr24, hm25, ht23, nacr25, p23}. 

Here, we combine the notions of Carlitz and Arndt compositions (Section 2) and consider two natural generalizations (Sections 3 and 4).  Our methods are primarily combinatorial, but we also determine related generating functions.

We note that Prodinger considered a more complicated combination of the Carlitz and Arndt conditions in 2023 \cite{p23} prompting us to use the notation $\CA(n)$ for the compositions studied here to avoid any confusion with his $AC(n)$.

\section{Combining the Carlitz and Arndt conditions}

Our first objects of study are the compositions of $n$ satisfying a combination of the Carlitz and Arndt restrictions.

\begin{definition} \label{cadef}
Let the Carlitz--Arndt compositions $\CA(n)$ be the compositions of $n$ such that $c_{2i-1} \ne c_{2i}$ for each $i$.  If the number of parts is odd, then there is no restriction on the last part.
\end{definition}

See Table \ref{tabca} for examples of Carlitz--Arndt compositions.  Notice, for instance, that $(2,1,1,2) \in \CA(6)$ even though $c_2 = c_3 = 1$ as the composition satisfies $c_1 \ne c_2$ and $c_3 \ne c_4$.

\begin{table}[h]
\begin{center}
\begin{tabular}{c|l|c}
$n$ & $\CA(n)$ & $ca(n)$ \\ \hline
1 & 1 & 1 \\
2 & 2 & 1 \\
3 & 3, 21, 12 & 3 \\
4 & 4, 31, 211, 13, 121  & 5 \\
5 & 5, 41, 32, 311, 23, 212, 14, 131, 122 & 9 \\
6 & 6, 51, 42, 411, 321, 312, 24, 231, 213, 2121,  \\ & \quad 2112, 15, 141, 132, 123, 1221, 1212 & 17
\end{tabular}
\end{center}
\caption{Carlitz--Arndt compositions and their counts for small values of $n$ using the shorthand $2112$ for $(2,1,1,2)$, for example.} \label{tabca}
\end{table}

The sequence $ca(n)$ matches \cite[A000213]{o}, a version of ``tribonacci'' numbers.  We establish the connection in our first theorem.

\begin{thm}\label{k=1}
The Carlitz--Arndt compositions $\CA(n)$ satisfy \[ca(n) = ca(n-1) + ca(n-2) + ca(n-3)\] with initial values $ca(1) = ca(2) = 1$ and $ca(3) = 3$.
\end{thm}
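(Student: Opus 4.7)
The plan is to compute the generating function $F(x) = \sum_{n \ge 0} ca(n)\, x^n$ in closed form (using the convention $ca(0) = 1$ for the empty composition) and then extract the recurrence by clearing denominators.

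First, I would set up a structural decomposition. Grouping the parts from the left in twos, every $\alpha \in \CA(n)$ is a concatenation of some number $k \ge 0$ of ``restricted pairs'' $(c_{2i-1}, c_{2i})$ with $c_{2i-1} \ne c_{2i}$, optionally followed by a single unpaired part when the total length is odd. An ordered pair of distinct positive integers has generating function
\[
D(x) = \left(\frac{x}{1-x}\right)^2 - \frac{x^2}{1 - x^2} = \frac{2x^3}{(1-x)^2(1+x)},
\]
obtained by inclusion--exclusion (all ordered pairs minus those with equal entries). The sequence construction plus the optional trailing part then yields
\[
F(x) = \frac{1}{1 - D(x)}\left(1 + \frac{x}{1-x}\right) = \frac{1}{(1-x)\bigl(1 - D(x)\bigr)}.
\]

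Next, I would simplify. Since $(1-x)^2(1+x) = 1 - x - x^2 + x^3$, we have $1 - D(x) = \frac{1 - x - x^2 - x^3}{(1-x)^2(1+x)}$, and the $(1-x)^2(1+x)$ in the numerator cancels with the extra $\frac{1}{1-x}$ factor to produce
\[
F(x) = \frac{1 - x^2}{1 - x - x^2 - x^3}.
\]
Clearing denominators gives $(1 - x - x^2 - x^3)\,F(x) = 1 - x^2$; comparing coefficients of $x^n$ for $n \ge 3$ produces the recurrence $ca(n) = ca(n-1) + ca(n-2) + ca(n-3)$. The initial values $ca(1) = ca(2) = 1$ and $ca(3) = 3$ can be read from Table~\ref{tabca} (and are consistent with $ca(0) = 1$ through the same recurrence, since $3 = 1 + 1 + 1$).

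The argument is essentially mechanical once the block decomposition is in place; the one point requiring care is aligning the convention $ca(0) = 1$ with the stated base cases so that the recurrence holds unambiguously from $n \ge 4$ onward. A fully bijective proof---partitioning $\CA(n)$ into three subfamilies in bijection with $\CA(n-1)$, $\CA(n-2)$, and $\CA(n-3)$---is also conceivable, but the parity-dependent nature of the Carlitz--Arndt constraint makes such a direct bijection awkward (removing or prepending a single part shifts the pair structure), so the generating function route is the natural choice.
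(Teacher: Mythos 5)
Your proof is correct, but it takes a genuinely different route from the paper. The paper proves Theorem~\ref{k=1} combinatorially: it splits $\CA(n)$ by parity of length and constructs explicit bijections $\CA^o(n) \cong \CA(n-1)$ and $\CA^e(n) \cong \CA(n-2) \cup \CA(n-3)$, where the maps edit only the last one or two parts (decrementing, appending a $1$, or splitting off a $(1,2)$ depending on cases). Your generating function argument is essentially the one the paper defers to Section~3: your $D(x) = \frac{2x^3}{(1-x)^2(1+x)}$ is exactly the pair generating function \eqref{gflowerpair} of Theorem~\ref{gflower} at $k=1$, and your closed form $\frac{1-x^2}{1-x-x^2-x^3}$ is precisely what the paper states immediately after Table~\ref{tabcabij} as an alternative derivation. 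Your block decomposition (sequence of unequal pairs plus an optional trailing part) is valid and the algebra checks out, so the argument is complete; the one small caveat is that your closing remark that a direct bijective proof would be ``awkward'' undersells the situation --- the parity split makes the bijection quite manageable, and the paper's combinatorial proof buys an explicit correspondence (illustrated in Table~\ref{tabcabij}) and a template that extends to the harder recurrence of Theorem~\ref{gek}, where the generating function and the bijection each require real work. What your approach buys is brevity and uniformity: the same computation handles all $k$ at once, which is why the paper also records it as Theorem~\ref{gflower}.
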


\begin{proof}
The initial values follow from Table \ref{tabca}.

For the recurrence, we establish a slightly stronger result based on the parity of the length of compositions in $\CA(n)$.  Let $\CA^o(n)$ be the compositions of $\CA(n)$ with an odd number of parts, similarly $\CA^e(n)$ for an even number of parts.  We demonstrate bijections
\begin{gather}
 \CA^o(n) \cong \CA(n-1), \label{k=1odd} \\
 \CA^e(n) \cong \CA(n-2) \cup \CA(n-3). \label{k=1even}
 \end{gather}
Since $\CA^o(n) \cap \CA^e(n) = \varnothing$ and $\CA(n) = \CA^o(n) \cup \CA^e(n)$, these will imply 
\[ \CA(n) \cong \CA(n-1)  \cup \CA(n-2) \cup \CA(n-3)\]
from which the result follows.

\textbf{First map, from $\CA^o(n)$:} Given $(c_1, \ldots, c_{2s-1}) \in \CA^o(n)$, modify the last part by
\[c_{2s-1} \mapsto c_{2s-1}-1\]
which we understand to mean removing the last part if $c_{2s-1} = 1$.  This does not change any of the pairwise conditions before $c_{2s-1}$, so the resulting composition is in $\CA(n-1)$.

\textbf{First map, from $\CA(n-1)$:} Given $(c_1, \ldots, c_t) \in \CA(n-1)$, modify the last part by
\[ c_t \mapsto
\begin{cases} c_t + 1 & \text{if $t$ is odd}, \\
(c_t, 1) & \text{if $t$ is even}. \end{cases} \]
In either case, the result is clearly a composition of $n$ of odd length.  In the first case, $\CA(n)$ has no restriction on the last part of an odd-length composition.  In the second case, the pair ending in $c_t$ already satisfied the conditions for $\CA(n-1)$ and there is no issue with the last part 1.  So the image is in $\CA^o(n)$.

It is clear that these maps are inverses, establishing \eqref{k=1odd}.

\textbf{Second map, from $\CA^e(n)$:} Given $(c_1, \ldots, c_{2s}) \in \CA^e(n)$, modify the last two parts by
\[ (c_{2s-1}, c_{2s}) \mapsto
\begin{cases} c_{2s} - 2 & \text{if $c_{2s-1} = 1$ \; (image in $\CA(n-3)$),} \\
(c_{2s-1} - 1, c_{2s} -1) & \text{if $c_{2s-1} > 1$ \; (image in $\CA(n-2)$).} \end{cases} \]
where, again, zeros are removed.  In the first case, $c_{2s-1} = 1$ implies that $c_{2s} \ge 2$ so that $c_{2s}-2$ is nonnegative and the sum has decreased by three (one from $c_{2s-1}=1$ and two from $c_{2s}$).  Any pairwise conditions before $c_{2s-1}$ are not changed and there is at most one more part, so the image is indeed in $\CA(n-3)$.  In the second case, $c_{2s-1} > 1$ means that $c_{2s-1} - 1$ is positive and not equal to $c_{2s} -1$ (which could be zero) since $c_{2s-1} \ne c_{2s}$, so the image is in fact in $\CA(n-2)$.

\textbf{Second map, from $\CA(n-2) \cup \CA(n-3)$:} 
Given $(c_1, \ldots, c_t) \in \CA(n-2)$, modify the last two parts by
\[ (c_{t-1}, c_t) \mapsto
\begin{cases} (c_{t-1}, c_t + 1, 1) & \text{if $t$ is odd} \\
& \quad \text{(image in $\CA^e(n)$ with $c_{2s-1} > 1$, $c_{2s} =1$),} \\
(c_{t-1}+1, c_t+1) & \text{if $t$ is even} \\
& \quad \text{(image in $\CA^e(n)$ with $c_{2s-1} > 1$, $c_{2s} > 1$)}, \end{cases} \]
where the first case includes the possibility that $t = 1$, i.e., there is no part $c_{t-1}$.
In the first case, the new final pair $(c_t + 1, 1)$ and any previous pairs satisfy the pairwise condition, so the image is in $\CA^e(n)$.  In the second case, the condition $c_{t-1} \ne c_t$ from $\CA(n-2)$ implies that $c_{t-1} + 1 \ne c_t + 1$, so the image is again in $\CA^e(n)$.

Given $(c_1, \ldots, c_t) \in \CA(n-3)$, modify the last part by
\[ c_t \mapsto
\begin{cases} (1, c_t + 2) & \text{if $t$ is odd} \\
& \quad \text{(image in $\CA^e(n)$ with $c_{2s-1} = 1$, $c_{2s} >2$),} \\
(c_t, 1, 2) & \text{if $t$ is even} \\
& \quad \text{(image in $\CA^e(n)$ with $c_{2s-1} = 1$, $c_{2s} = 2$)}. \end{cases} \]
Note that the first case produces a pair of unequal parts $(1, c_t + 2)$ at the end; any previous pairs satisfy the pairwise conditions for $\CA(n-3)$ so that the image is in $\CA^e(n)$.  In the second case, the pair with second part $c_t$ along with any previous pairs satisfy the pairwise conditions for $\CA(n-3)$ and the new final pair $(1,2)$ is valid, so the image is again in $\CA^e(n)$.

It is clear that these maps are inverses, establishing \eqref{k=1even}.
\end{proof}

See Table \ref{tabcabij} for the $n = 6$ example of the bijections in the proof.

\begin{table}[h]
\begin{center}
\begin{tabular}{r|r||r|r}
$\CA^o(6)$ & $\CA(5)$ & $\CA^e(6)$ & $\CA(4) \cup \CA(3)$ \\ \hline
6 & 5 & 51 & 4 \\
411 & 41 & 42 & 31 \\
321 & 32 & 24 & 13 \\
312 & 311 & 2121 & 211 \\
231 & 23 & 2112 & 21 \\
213 & 212 & 15 & 3 \\
141 & 14 & 1221 & 121 \\
132 & 131 & 1212 &  12 \\
123 & 122 
\end{tabular}
\end{center}
\caption{The bijections $\CA^o(6) \cong \CA(5)$ and $\CA^e(6) \cong \CA(4) \cup \CA(3)$ used in the proof of Theorem \ref{k=1}.} \label{tabcabij}
\end{table}

Theorem \ref{k=1} also follows from the generating function
\[ 1 + \sum_{n \ge 1} ca(n) x^n = \frac{1-x^2}{1-x-x^2-x^3}, \]
a special case of Theorem \ref{gflower} in the next section.

Another set of restricted compositions counted by \cite[A000213]{o} are those with no consecutive parts 1; see Table \ref{tabno11} for initial examples of these sets $C_{\widehat{1,1}}(n)$ and their counts.  These generalize compositions considered by Cayley where there are no parts 1; like Arndt's original compositions, Cayley's are also counted by the Fibonacci numbers.  See \cite{hoc, hout} for more historical background. 

\begin{table}[h]
\begin{center}
\begin{tabular}{c|l|c}
$n$ & $C_{\widehat{1,1}}(n)$ & $c_{\widehat{1,1}}(n)$ \\ \hline
1 & 1 & 1 \\
2 & 2 & 1 \\
3 & 3, 21, 12 & 3 \\
4 & 4, 31, 22, 13, 121  & 5 \\
5 & 5, 41, 32, 23, 221, 212, 14, 131, 122 & 9 \\
\end{tabular}
\end{center}
\caption{Compositions without adjacent parts 1 for small values of $n$.} \label{tabno11}
\end{table}

We establish the connection between the Carlitz--Arndt compositions of $n$ and these $C_{\widehat{1,1}}(n)$ combinatorially.

\begin{thm}\label{without-11-bij}
There is a bijection $\CA(n) \cong C_{\widehat{1,1}}(n)$.
\end{thm}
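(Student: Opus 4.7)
My plan is to mirror the approach of Theorem~\ref{k=1}: I will exhibit a bijection decomposing $C_{\widehat{1,1}}(n)$ in exactly the same tribonacci pattern that was established for $\CA(n)$, then conclude by induction on $n$. The base cases $n \in \{1,2,3\}$ are immediate from Tables~\ref{tabca} and~\ref{tabno11}, where $\CA(n)$ and $C_{\widehat{1,1}}(n)$ are in fact the same sets (the forbidden patterns cannot yet arise).

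For $n \ge 4$ I will establish
\[ C_{\widehat{1,1}}(n) \cong C_{\widehat{1,1}}(n-1) \sqcup C_{\widehat{1,1}}(n-2) \sqcup C_{\widehat{1,1}}(n-3) \]
by partitioning $C_{\widehat{1,1}}(n)$ according to the value of the last part $c_t$. Compositions with $c_t = 2$ map to $C_{\widehat{1,1}}(n-2)$ by deleting $c_t$, and those with $c_t = 3$ map to $C_{\widehat{1,1}}(n-3)$ by the same rule. The remaining compositions---those with $c_t = 1$ or $c_t \ge 4$---will be sent to $C_{\widehat{1,1}}(n-1)$ via a two-branch rule: if $c_t = 1$ delete the last part, and if $c_t = k \ge 4$ replace $c_t$ by the two-part tail $(k-2,\,1)$.

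The crux will be verifying that the third branch is a bijection onto all of $C_{\widehat{1,1}}(n-1)$. When $c_t = 1$, the deletion leaves $c_{t-1} \ge 2$ as the new final part (forced by the adjacent-1 condition), producing exactly the compositions of $n-1$ ending in a part at least $2$. When $c_t = k \ge 4$, the inequality $k - 2 \ge 2$ ensures the new tail $(k-2,\,1)$ introduces no adjacent 1s, and the image is a composition of $n-1$ ending in $1$. These two image families partition $C_{\widehat{1,1}}(n-1)$, and each branch is visibly invertible (append a 1, or delete the trailing 1 and add 2 to the new last part). Composing with the bijection of Theorem~\ref{k=1} closes the induction and realizes $\CA(n) \cong C_{\widehat{1,1}}(n)$.
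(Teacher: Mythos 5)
Your argument is correct, but it takes a genuinely different route from the paper. The paper constructs an explicit, non-recursive bijection: scan a composition in $\CA(n)$ for triples $(c_{2i-1},c_{2i},c_{2i+1})=(a,1,1)$ and rewrite each as $((a+2)/2,(a+2)/2)$ or $((a+1)/2,(a+1)/2,1)$ according to the parity of $a$, with an inverse that rewrites $(k,k,m)$ patterns at odd positions; this identifies exactly which compositions differ between the two families (most are fixed). You instead prove that $C_{\widehat{1,1}}(n)$ satisfies the same tribonacci decomposition as $\CA(n)$ via a last-part analysis ($c_t=2$ and $c_t=3$ delete cleanly; $c_t=1$ and $c_t\ge 4$ together cover $C_{\widehat{1,1}}(n-1)$, split by whether the image ends in a part $\ge 2$ or in a $1$), and then conclude by induction from Theorem~\ref{k=1}. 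Your edge cases check out: for $n\ge 4$ a composition ending in $1$ has a penultimate part $\ge 2$, and a composition of $n-1$ ending in $1$ likewise, so the map $(\ldots,m,1)\mapsto(\ldots,m+2)$ with $m+2\ge 4$ is the correct inverse of the fourth branch; the base cases $n\le 3$ are handled by the tables. What each approach buys: yours is shorter, self-contained, and yields as a byproduct an independent proof that $c_{\widehat{1,1}}(n)$ satisfies the A000213 recurrence, but it only establishes equinumerosity (the bijection it produces is the recursively assembled one, not canonical); the paper's rewriting map is more work to verify but gives a direct structural correspondence, which is what the authors exploit in the worked example following the theorem.
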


\begin{proof}
\textbf{Map from $\CA(n)$:} Consider a composition $c = (c_1, \dots, c_t) \in \CA(n)$. Scan from left to right for triples of the form $(c_{2i-1}, c_{2i}, c_{2i+1}) = (a,1,1)$; necessarily, $a \ge 2$. In the first such instance, make the replacement
\[ (a,1,1) \mapsto \begin{cases} ((a+2)/2, (a+2)/2) & \text{if $a$ is even,} \\ ((a+1)/2, (a+1)/2,1) & \text{if $a$ is odd,} \end{cases} \]
which preserves the composition sum, and scan the resulting composition again from left to right, making additional replacements as necessary.  
This procedure removes any instances of $(1,1)$, so the resulting composition is in $C_{\widehat{1,1}}(n)$. 

\textbf{Map from $C_{\widehat{1,1}}(n)$:} Given $d = (d_1, \dots, d_s) \in C_{\widehat{1,1}}(n)$, scan from left to right for triples of the form $(d_{2i - 1}, d_{2i}, d_{2i+1}) = (k,k,m)$; necessarily, $k \ge 2$ and we allow the case where $d_{2i}$ is the last part by considering $d_{2i+1} = 0$.  In the first such instance, make the replacement
\[ (k,k,m) \mapsto \begin{cases} (2k-1,1,1) & \text{if $m = 1$,} \\ (2k-2,1,1,m) & \text{otherwise,} \end{cases} \]
which preserves the composition sum, and scan the resulting composition again from left to right, making additional replacements as necessary.
The procedure removes any instances of $c_{2i-1} = c_{2i}$ (note that any adjacent parts 1 occur at $c_{2i}$ and $c_{2i+1}$ for some $i$), so the resulting composition is in $\CA(n)$.

It is straightforward to confirm that the two maps are inverses.
\end{proof}

As an example of the reverse map, consider $(2,2,2,2,2) \in C_{\widehat{1,1}}(10)$.  The first triple of the form $(k,k,m)$ starting at an odd position is $(2,2,2)$ at $i = 1$.  This is replaced by $(2,1,1,2)$ giving $(2,1,1,2,2,2)$.  Scanning this composition gives $(2,2,0)$ starting at $i = 5$ (the $(2,2,2)$ starting at $i = 4$ is not relevant) which is replaced by $(2,1,1)$ to give $(2,1,1,2,2,1,1) \in \CA(10)$.

Table \ref{tabcano11bij} shows the nonidentity cases of the bijection for $n = 8$.
Of the 57 compositions in $\CA(8)$, just 11 contain at least one triple of the form $(a,1,1)$ at an odd index; the other 46 have no $(1,1)$ subsequence.  

\begin{table}[h]
\begin{center}
\begin{tabular}{l|l}
$\CA(8)$ & $C_{\widehat{1,1}}(8)$ \\ \hline
$611$        & $44$ \\
$4112$      & $332$ \\
$3113$      & $2213$ \\
$2114$      & $224$ \\
$31121$    & $22121$ \\
$21131$    & $2231$ \\
$21122$    & $2222$ \\
$31211$    & $3122$ \\
$21311$    & $21221$ \\
$13211$    & $1322$ \\
$12311$    & $12221$
\end{tabular}
\end{center}
\caption{The compositions that change in the bijection $\CA(8) \cong C_{\widehat{1,1}}(8)$ used in the proof of Theorem \ref{without-11-bij}.} \label{tabcano11bij}
\end{table}

\section{Absolute differences bounded below}

In order to generalize standard Carlitz compositions, recast the defining condition $c_i \ne c_{i+1}$ as $| c_i - c_{i+1}| \ge 1$.  Considering $| c_{2i-1} - c_{2i}| \ge k$ for a given positive integer $k$ leads to a family of compositions which we analyze in this section.  Motivations from the partition setting include Rogers--Ramanujan partitions, where parts differ by at least two (sometimes called super-distinct), and Schur partitions, which include the requirement that parts differ by at least three; see Andrews and Eriksson \cite{ae04} for more information.

\begin{definition} \label{calowerdef}
Given a positive integer $k$, let the generalized Carlitz--Arndt compositions $\CA_{\ge k}(n)$ be the compositions of $n$ such that $| c_{2i-1} - c_{2i}| \ge k$ for each $i$.  If the number of parts is odd, then there is no restriction on the last part.
\end{definition}

Table \ref{tabcalowerex} gives examples of $\CA_{\ge k}(n)$ for $k = 2$ and $k = 3$.  From the definition, it is clear that $\CA_{\ge k+1}(n) \subseteq \CA_{\ge k}(n)$ for all positive $k$.  Note that $\CA_{\ge 1}(n) = \CA(n)$ of the previous section.  Allowing $k = 0$ would give $\CA_{\ge 0}(n) = C(n)$, as there would be no restrictions.  

\begin{table}[h]
\begin{center}
\begin{tabular}{c|l|l}
$n$ & $\CA_{\ge 2}(n)$ & $\CA_{\ge 3}(n)$\\ \hline
1 & 1 & 1\\
2 & 2 & 2\\
3 & 3 & 3  \\
4 & 4, 31, 13  & 4 \\
5 & 5, 41, 311, 14, 131 & 5, 41, 14 \\
6 & 6, 51, 42, 411, 312, 24, 15, 141, 132 & 6, 51, 411, 15, 141 
\end{tabular}
\end{center}
\caption{The generalized Carlitz--Arndt compositions $\CA_{\ge k}(n)$ for small $n$ and $k = 2, 3$.} \label{tabcalowerex}
\end{table}

Several counts $ca_{\ge k}(n)$ are given in Table \ref{tabgek} along with the recurrence signature for each $k$; this shorthand gives the coefficients of terms in a homogeneous linear recurrence.  For example, the recurrence for $ca_{\ge 1}(n) = ca(n)$ shown in Theorem \ref{k=1} is denoted $[1,1,1]$.  We note that, of these sequences, only the $k=1$ row is currently in the On-Line Encyclopedia of Integer Sequences \cite{o}.

\begin{table}[h]
\begin{center}
\begin{tabular}{c|rrrrrrrrrr|l}
$k \backslash n$ & 1 & 2 & 3 & 4 & 5 & 6 & 7 & 8 & 9 & 10 & recurrence signature \\ \hline
1 & 1 & 1 & 3 & 5 & 9 & 17 & 31 & 57 & 105 & 193 & $[1,1,1]$ \\
2 & 1 & 1 & 1 & 3 & 5 & 9 & 13 & 23 & 37 & 65 & $[1,1,-1,2]$ \\
3 & 1 & 1 & 1 & 1 & 3 & 5 & 9 & 13 & 19 & 29 &  $[1,1,-1,0,2]$ \\
4 & 1 & 1 & 1 & 1 & 1 & 3 & 5 & 9 & 13 & 19 & $[1,1,-1,0,0,2]$ 
\end{tabular}
\end{center}
\caption{Sequences $ca_{\ge k}(n,k)$ for small $n$ and $k$ with recurrence signatures.} \label{tabgek}
\end{table}

We establish the general recurrence relation suggested in Table \ref{tabgek} combinatorially.

\begin{thm} \label{gek}
For a given positive integer $k$, the generalized Carlitz--Arndt compositions $\CA_{\ge k}(n)$ satisfy
\[ca_{\ge k}(n) = ca_{\ge k}(n-1) + ca_{\ge k}(n-2) - ca_{\ge k}(n-3) + 2ca_{\ge k}(n-k-2)\]
with initial values $ca_{\ge k}(1) = \cdots = ca_{\ge k}(k+1) = 1$ and $ca_{\ge k}(k+2) = 3$.
\end{thm}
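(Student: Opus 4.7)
The plan is to adapt the parity-based bijective framework from the proof of Theorem~\ref{k=1}. Write $\CA^o_{\ge k}(n)$ and $\CA^e_{\ge k}(n)$ for the odd- and even-length compositions in $\CA_{\ge k}(n)$. The last-part adjustment used for $k=1$ still works here because it alters only an unpaired final part: decreasing $c_{2s-1}$ by one (removing it if $c_{2s-1}=1$) gives the bijection $\CA^o_{\ge k}(n) \cong \CA_{\ge k}(n-1)$, which accounts for the $ca_{\ge k}(n-1)$ term.

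The core of the proof is to establish
\[ ca^e_{\ge k}(n) = ca^e_{\ge k}(n-2) + 2\, ca_{\ge k}(n-k-2). \]
Applying the odd-length bijection at level $n-2$ then gives $ca^e_{\ge k}(n-2) = ca_{\ge k}(n-2) - ca_{\ge k}(n-3)$, and adding $ca^o_{\ge k}(n) = ca_{\ge k}(n-1)$ yields the claimed recurrence. To prove the displayed identity I split $\CA^e_{\ge k}(n)$ according to the terminal pair $(c_{2s-1}, c_{2s})$. If both parts are at least $2$, subtracting one from each preserves $|c_{2s-1}-c_{2s}| \ge k$ and gives a bijection onto $\CA^e_{\ge k}(n-2)$, with inverse adding one to each of the last two parts. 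Otherwise the pairwise condition forces exactly one of $c_{2s-1}, c_{2s}$ to equal $1$ while the other is at least $k+1$; these two symmetric subclasses each biject onto $\CA_{\ge k}(n-k-2)$ by stripping the terminal $1$ and reducing the large neighbor by $k+1$ (deleting it if it becomes zero). The inverse appends $(k+1,1)$ or $(1,k+1)$ to an even-length image, and for an odd-length image absorbs the unpaired final part into a new terminal pair $(d_t+k+1,1)$ or $(1,d_t+k+1)$.

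The initial values are immediate: any pair $(a,b)$ in a valid $\CA_{\ge k}$ composition has $a+b \ge k+2$, so no composition of $n \le k+1$ can contain a pair, leaving only the singleton $(n)$; and for $n=k+2$ the three compositions are $(k+2)$, $(1,k+1)$, and $(k+1,1)$. The main obstacle I anticipate is the case analysis in the subcase where one of $c_{2s-1}, c_{2s}$ equals $1$: the forward map produces an odd- or even-length image in $\CA_{\ge k}(n-k-2)$ depending on whether the large part strictly exceeds $k+1$ or equals $k+1$, and this parity split must match up exactly with the two cases of the inverse so that each symmetric subclass bijects onto all of $\CA_{\ge k}(n-k-2)$.
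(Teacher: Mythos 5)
Your proof is correct, and while it shares the paper's overall framework (split by parity of length, reuse the $k=1$ last-part bijection to get $\CA^o_{\ge k}(n) \cong \CA_{\ge k}(n-1)$), your treatment of the even-length compositions is a genuinely different and cleaner decomposition. The paper proves the four-term identity $\CA^e_{\ge k}(n) \cup \CA_{\ge k}(n-3) \cong \CA_{\ge k}(n-2) \cup 2\CA_{\ge k}(n-k-2)$ directly, which forces an asymmetric case analysis: the terminal pairs with $c_{2s}=1$, $c_{2s-1}>1$ are sent into $\CA^o_{\ge k}(n-2)$, and the second copy of $\CA_{\ge k}(n-k-2)$ is then recovered from $\CA^o_{\ge k}(n-3)$ via a further split on whether the last part is at least $k-1$; one must also check that the various images inside $\CA_{\ge k}(n-2)$ do not collide. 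You instead partition $\CA^e_{\ge k}(n)$ by terminal pair into three disjoint classes (both parts $\ge 2$; $c_{2s-1}=1$; $c_{2s}=1$), send the first onto $\CA^e_{\ge k}(n-2)$ and the two symmetric classes each onto a full copy of $\CA_{\ge k}(n-k-2)$ by stripping the $1$ and subtracting $k+1$, and then convert $ca^e_{\ge k}(n-2)$ into $ca_{\ge k}(n-2) - ca_{\ge k}(n-3)$ algebraically by invoking the odd-length bijection at level $n-2$. The two identities are formally equivalent precisely because of that odd-length bijection, but your version has fewer cases, treats the two ``one part equals $1$'' situations symmetrically, and makes the injectivity of the combined map obvious since the three targets are manifestly disjoint. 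The one point you flag as a potential obstacle --- matching the parity of the image in $\CA_{\ge k}(n-k-2)$ with the two branches of the inverse (append $(1,k+1)$ to an even-length image versus absorb the unpaired last part into $(1, d_t+k+1)$) --- does work out exactly as you describe, since the forward map yields an even-length image precisely when the large part equals $k+1$.
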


\begin{proof}
When $k=1$, the recurrence reduces to \[ca_{\ge 1}(n-1) + ca_{\ge 1}(n-2) + ca_{\ge 1}(n-3)\] which is Theorem \ref{k=1}.  Therefore, we assume $k \ge 2$.

For the initial values, the only compositions in $\CA_{\ge k}(n)$ for $n \le k+1$ are the single part compositions $(n)$.  Also, $\CA_{\ge k}(k+2) = \{ (k+2), (k+1,1), (1,k+1)\}$.

For the recurrence, we establish a bijection\[\CA_{\ge k}(n) \cup \CA_{\ge k}(n-3) \cong \CA_{\ge k}(n-1) \cup \CA_{\ge k}(n-2) \cup 2\CA_{\ge k}(n-k-2)\] where the coefficient two denotes two copies of that set.

As in the proof of Theorem \ref{k=1}, the bijection is simplified by considering the odd and even length compositions in $\CA_{\ge k}(n)$ separately.  We show 
\begin{gather}
\CA_{\ge k}^o(n) \cong \CA_{\ge k}(n-1), \label{gekodd} \\
\CA_{\ge k}^e(n) \cup \CA_{\ge k}(n-3) \cong  \CA_{\ge k}(n-2) \cup 2\CA_{\ge k}(n-k-2). \label{gekeven}
\end{gather}

The same bijection used in Theorem \ref{k=1} to establish \eqref{k=1odd} gives \eqref{gekodd}, so we proceed to a bijection for \eqref{gekeven}.

\textbf{Map from $\CA_{\ge k}^e(n) \cup \CA_{\ge k}(n-3)$:} The details of the bijection depend on the parity of composition length.  Statements on the images pertain to both the sum and satisfying the pairwise difference criterion.

For a composition in $\CA_{\ge k}^e(n)$ with last part $c_{2j}$, make the substitution
 \[  (c_{2j-1}, c_{2j}) \mapsto 
 \begin{cases} (c_{2j-1} - 1, c_{2j} - 1) & \text{if $c_{2j-1} > 1$, $c_{2j} > 1$} \\ & \; 
 \text{(image in $\CA_{\ge k}^e(n-2)$),} \\
 c_{2j-1}-1 & \text{if $c_{2j-1} > 1$, $c_{2j} = 1$} \\ & \;
 \text{(image in $\CA_{\ge k}^o(n-2)$, last part $\ge k$),} \\
 c_{2j}-k-1 & \text{if $c_{2j-1} = 1$ (image in $\CA_{\ge k}(n-k-2)$)}. \end{cases}\]
 Note that in the third case, the new part $c_{2j}-k-1$ could be zero.
 
 For a composition in $\CA_{\ge k}^e(n-3)$ with last part $c_{2j}$, make the replacement
 \[ c_{2j} \mapsto (c_{2j},1)\]
giving an image in $\CA_{\ge k}^o(n-2)$ with last part 1.
 
 For a composition in $\CA_{\ge k}^o(n-3)$ with last part $c_{2j-1}$, make the replacement
\[ c_{2j-1} \mapsto
 \begin{cases} c_{2j-1} - k + 1 & \text{if $c_{2j-1} \ge k - 1$ (image in $\CA_{\ge k}(n-k-2)$),} \\
 c_{2j-1} + 1 & \text{if $c_{2j-1} < k - 1$} \\ & \; \text{(image in $\CA_{\ge k}^o(n-2)$, $1 < $ last part $ < k$}). \end{cases}\]
 As before, in the first case, the new part $c_{2j-1} - k + 1$ could be zero.
 
 The notes on the results of the maps show that the images in $\CA_{\ge k}(n-2)$ are distinct while compositions in $\CA_{\ge k}(n-k-2)$ arise at most twice.
 
 \textbf{Map from $\CA_{\ge k}(n-2) \cup 2\CA_{\ge k}(n-k-2)$:} For a composition in $\CA_{\ge k}^e(n-2)$ with last part $c_{2j}$, make the substitution
\[(c_{2j-1}, c_{2j}) \mapsto (c_{2j-1} + 1, c_{2j} + 1)\]
giving an image in $\CA_{\ge k}^e(n)$ with penultimate part greater than 1.

For a composition in $\CA_{\ge k}^o(n-2)$ with last part $c_{2j-1}$, apply
\[ c_{2j-1} \mapsto \begin{cases} 
(c_{2j-1}+1,1) & \text{if $c_{2j-1} > k-1$ \; (image in $\CA_{\ge k}^e(n)$, last part 1)} \\
c_{2j-1} - 1 & \text{if $1 < c_{2j-1} \le k-1$} \\ & \; \text{(image in $\CA_{\ge k}^o(n-3)$, last part $< k-1$)}, \\
\text{removed} & \text{if $c_{2j-1} = 1$ \; (image in $\CA_{\ge k}^e(n-3)$)}. \end{cases} \]

For a composition in the first copy of $\CA_{\ge k}^e(n-k-2)$ with last part $c_{2j}$, make the substitution
\[ c_{2j} \mapsto (c_{2j}, k-1) \]
giving an image in $\CA_{\ge k}^o(n-3)$ with last part $k-1$.

For a composition in the first copy of $\CA_{\ge k}^o(n-k-2)$ with last part $c_{2j-1}$, make the substitution
\[ c_{2j-1} \mapsto c_{2j-1} + k-1 \]
giving an image in $\CA_{\ge k}^o(n-3)$ with last part greater than $k - 1$.

For a composition in the second copy of $\CA_{\ge k}^e(n-k-2)$ with last part $c_{2j}$, make the substitution
\[  c_{2j} \mapsto (c_{2j}, 1,k+1)\] 
giving an image in $\CA_{\ge k}^e(n)$ with penultimate part 1 and last part $k+1$.

For a composition in the second copy of $\CA_{\ge k}^o(n-k-2)$ with last part $c_{2j-1}$, make the substitution
\[ c_{2j-1} \mapsto (1, c_{2j-1}+k+1)\]
giving an image in $\CA_{\ge k}^e(n)$ with penultimate part 1 and last part greater than $k+1$.

The notes on the results of the maps show that the images in $\CA_{\ge k}^e(n) \cup \CA_{\ge k}(n-3)$ are distinct.
 
 It is straightforward to verify in each case that the maps are inverses, establishing the bijection.
 \end{proof}

Table \ref{tabgekbij2} shows the bijections for $k = 3$ and $n = 10$, the smallest example where all cases arise.

\begin{table}[h]
\begin{center}
\begin{tabular}{r|r||r|r}
$\CA^o_{\ge 3}(10)$ & $\CA_{\ge 3}(9)$ & $\CA^e_{\ge 3}(10) \cup \CA_{\ge 3}(7)$ & $\CA_{\ge 3}(8) \cup 2\CA_{\ge 3}(5)$   \\ \hline
$10$ & $9$ & $91$ & $8$ \\
$811$ & $81$ & $82$ & $71$ \\
$721$ & $72$ & $73$ & $62$ \\
$712$ & $711$ & $4141$ & $413$ \\
$631$ & $63$ &  $4114$ & $41$\\
$622$ & $621$ & $37$ & $26$\\
$613$ & $612$ & $28$ & $17$ \\
$523$ & $522$ & $19$ & $5$ \\ 
$514$ & $513$ & $1441$ & $143$ \\
$415$ & $414$ & $1414$ & $14$ \\ \cline{3-3}
$361$ & $36$ & $7$ & $5$ \\ 
$271$ & $27$ & $61$ & $611$ \\
$262$ & $261$ & $52$ & $521$ \\
$253$ & $252$ & $511$ & $512$\\
$181$ & $18$ & $412$ & $41$ \\
$172$ & $171$ & $25$ & $251$ \\
$163$ & $162$ & $16$ & $161$ \\
$154$ & $153$ & $151$ & $152$\\
$145$ & $144$ & $142$ & $14$
\end{tabular}
\end{center}
\caption{The bijections $\CA^o_{\ge 3}(10) \cong \CA_{\ge 3}(9)$ (compare Table \ref{tabcabij}) and $\CA^e_{\ge 3}(10) \cup \CA_{\ge 3}(7) \cong \CA_{\ge 3}(8) \cup 2\CA_{\ge 3}(5)$ used in the proof of Theorem \ref{gek}.} \label{tabgekbij2}
\end{table}

Following the approach of Checa and Ram\'irez \cite{cr24}, we also establish a generating function for $ca_{\ge k}(n)$, providing alternative proofs of Theorems \ref{k=1} and \ref{gek}.

\begin{thm} \label{gflower}
Given a positive integer $k$, the generating function
\[ \sum_{n \ge 0} ca_{\ge k}(n) x^n = \frac{1-x^2}{1-x-x^2+x^3-2x^{k+2}}.\]
\end{thm}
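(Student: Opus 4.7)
The plan is to build the generating function by decomposing each composition in $\CA_{\ge k}(n)$ according to its structure: a (possibly empty) sequence of valid pairs, followed by at most one unrestricted trailing part when the length is odd.

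First I would compute the bivariate sum
\[ B_k(x) = \sum_{\substack{a,b \ge 1 \\ |a-b| \ge k}} x^{a+b},\]
which is the generating function for a single admissible pair $(c_{2i-1}, c_{2i})$. Splitting by $a \ge b+k$ versus $b \ge a+k$ and using symmetry gives
\[ B_k(x) = 2\sum_{b \ge 1} x^b \sum_{a \ge b+k} x^a = \frac{2x^k}{1-x}\sum_{b \ge 1} x^{2b} = \frac{2x^{k+2}}{(1-x)(1-x^2)}.\]

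Next, since the Carlitz--Arndt structure concatenates independent pairs and allows an optional final odd part (with generating function $x/(1-x)$ for a positive integer) or no final part (contributing $1$), the sequence construction gives
\[ \sum_{n \ge 0} ca_{\ge k}(n)\,x^n = \frac{1 + \frac{x}{1-x}}{1 - B_k(x)} = \frac{\frac{1}{1-x}}{1 - \frac{2x^{k+2}}{(1-x)(1-x^2)}}.\]
Clearing denominators turns this into $\dfrac{1-x^2}{(1-x)(1-x^2) - 2x^{k+2}}$, and expanding $(1-x)(1-x^2) = 1 - x - x^2 + x^3$ yields the claimed form.

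The main thing to be careful about is the decomposition step: one must justify that the pair-generating function $B_k$ applies independently to each consecutive pair (since the Arndt-style condition imposes no constraint across pair boundaries), and that the trailing odd part is genuinely unrestricted per Definition \ref{calowerdef}. Once this is articulated, the rest is a routine rational-function simplification, and as a bonus, the denominator $1 - x - x^2 + x^3 - 2x^{k+2}$ immediately yields the recurrence signature $[1,1,-1,0,\dots,0,2]$ of Theorem \ref{gek} and, specialized to $k=1$, recovers $1-x-x^2-x^3$ in the numerator-canceled form, reproving Theorem \ref{k=1}.
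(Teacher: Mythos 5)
Your proposal is correct and follows essentially the same route as the paper: compute the generating function $\tfrac{2x^{k+2}}{(1-x)(1-x^2)}$ for a single admissible pair, then apply the sequence construction with an optional unrestricted trailing part $x/(1-x)$ and simplify. The only cosmetic differences are that you sum over the lesser part first rather than the greater part, and you combine the even- and odd-length cases into a single fraction instead of treating them separately (and, as a small aside, at $k=1$ nothing actually cancels between numerator and denominator --- the denominator simply becomes $1-x-x^2-x^3$).
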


\begin{proof}
Each pair $(c_{2i-1}, c_{2i})$ in a composition of $\CA_{\ge k}(n)$ has a greater part $j \ge k+1$, a lesser part ranging from one to $j-k$, and two possible orders, thus a generating function is
\begin{equation}
\sum_{j \ge k+1} 2x^j(x + x^2 + \cdots + x^{j-k}) = \sum_{j \ge k+1} 2x^{j+1} \frac{1-x^{j-k}}{1-x} = \frac{2x^{k+2}}{(1-x)(1-x^2)} \label{gflowerpair}
\end{equation}
so that the compositions of even length have generating function
\[ \sum_{m \ge 0} \left( \frac{2x^{k+2}}{(1-x)(1-x^2)} \right)^{\!m} = \frac{1}{1-\frac{2x^{k+2}}{(1-x)(1-x^2)}} = \frac{1-x-x^2+x^3}{1-x-x^2+x^3-2x^{k+2}}\]
and the generating functions for compositions of odd length have the same expression using \eqref{gflowerpair} along with $x/(1-x)$ for the unrestricted final term:
\[ \sum_{m \ge 0} \left( \frac{2x^{k+2}}{(1-x)(1-x^2)} \right)^{\!m} \left( \frac{x}{1-x} \right) = \frac{x-x^3}{1-x-x^2+x^3-2x^{k+2}}.\]
Combining these two parity-based generating functions gives the desired expression.
\end{proof}

\begin{remark}
One could modify Definition \ref{calowerdef} to require that odd-length compositions satisfy that the last part is at least $k$, in effect making the length of every composition even with a possible part 0 at the end, and applying the inequality to that final pair.  We state without proof that this would change the initial values but not the recurrence of Theorem \ref{gek}.  While this would reduce the counts in Table \ref{tabgek}, it would also limit the motivating connection to Rogers--Ramanujan and Schur partitions.
\end{remark}

It was known to ancient Sanskrit prosodists that compositions with parts restricted to $\{1,2\}$ are counted by what are now known as the Fibonacci numbers with recurrence $F_n = F_{n-1} + F_{n-2}$ \cite{s86}.  Analogously, compositions with parts $\{1, 1', 2\}$, where a part $1'$ contributes one to the sum but is considered distinct from a part 1, are counted by the Pell numbers with recurrence $P_n = 2P_{n-1} + P_{n-2}$ \cite[A000129]{o}.  We connect the generalized Carlitz--Arndt compositions $\CA_{\ge k}(n)$ and a certain subset of these Pell compositions.

\begin{definition}
Let $P_{\ge k}(n)$ be the compositions with parts $\{1, 1', 2\}$ whose sum is $n$ that satisfy
\begin{enumerate}
\item[(Pa)] The first part is either 1 or $1'$ (not 2).
\item[(Pb)] Neither subsequence $(1, 1')$ or $(1', 1)$ occurs.
\item[(Pc)] Except at the end, each run of parts 1 or parts $1'$ has length at least $k$.
\item[(Pd)] The last part is either 1 or 2 (not $1'$).
\end{enumerate}
A run refers to a subsequence of equal parts.  The effect of condition (Pb) is that any run of ones is all parts $1$ or all parts $1'$, not a mix.
\end{definition}

Table \ref{pex} gives examples of $P_{\ge k}(n)$ for $k = 1$ and $k = 2$.  From the definition, it is clear that $P_{\ge k+1}(n) \subseteq P_{\ge k}(n)$ for all positive $k$.

\begin{table}[h]
\begin{center}
\begin{tabular}{c|l|l}
$n$ & $P_{\ge 1}(n)$ & $P_{\ge 2}(n)$\\ \hline
1 & 1 & 1 \\
2 & 11 & 11 \\
3 & 12, $1'2$, 111 & 111  \\
4 & 121, $1'21$, 112, $1'1'2$, 1111  & 112, $1'1'2$, 1111
\end{tabular}
\end{center}
\caption{The restricted Pell composition $P_{\ge k}(n)$ for small $n$ and $k = 1, 2$.}  \label{pex}
\end{table}

We connect the generalized Carlitz--Arndt compositions $\CA_{\ge k}(n)$ and the restricted Pell compositions $P_{\ge k}(n)$.

\begin{thm} \label{aclowerP}
For each positive integer $k$, there is a bijection $\CA_{\ge k}(n) \cong P_{\ge k}(n)$.
\end{thm}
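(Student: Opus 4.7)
The plan is to construct an explicit sum-preserving bijection that encodes each pair of a $\CA_{\ge k}(n)$ composition as a block of parts in the Pell composition, with the direction of the pair (decreasing vs.\ increasing) choosing between ordinary $1$s and primed $1'$s. For a pair $(c_{2i-1}, c_{2i})$, set $j = \max(c_{2i-1}, c_{2i})$ and $\ell = \min(c_{2i-1}, c_{2i})$, so $a := j-\ell \ge k$ and $b := \ell \ge 1$. Define $\phi\colon \CA_{\ge k}(n) \to P_{\ge k}(n)$ by the replacement
\[ (c_{2i-1}, c_{2i}) \mapsto \begin{cases} 1^a\, 2^b & \text{if } c_{2i-1} > c_{2i}, \\ (1')^a\, 2^b & \text{if } c_{2i-1} < c_{2i}, \end{cases} \]
followed by replacing an odd-length trailing part $c_{2s+1} = r$ by $1^r$, and concatenating. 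Each pair block has sum $a + 2b = j + \ell$, so sums match; and the parameters $(a,b) \in \{k, k+1, \ldots\} \times \{1,2,\ldots\}$ mirror the factorization $\tfrac{2x^{k+2}}{(1-x)(1-x^2)}$ of the pair generating function from the proof of Theorem \ref{gflower}, which is what makes this encoding natural.

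Next I would verify that the image lies in $P_{\ge k}(n)$. Every block begins with $1$ or $1'$, which gives (Pa); each pair block is monochromatic in its $1$-part and is separated from its neighbors by $2^b$ with $b \ge 1$, so no $1$ is ever adjacent to a $1'$ and a trailing $1^r$ cannot merge with a preceding $1'$-run, giving (Pb); each inner $1$-run or $1'$-run has length $a \ge k$, giving (Pc) (with the exempt trailing $1^r$ the only possibly shorter run); and the Pell composition ends with $2^b$ or with $1^r$, giving (Pd). For the inverse, I would take the maximal-run decomposition of any $d \in P_{\ge k}(n)$. Combining (Pa), (Pb), and (Pd) forces the runs to alternate strictly between $1$-type (either $1$ or $1'$) and $2$-type, starting with a $1$-type and ending either with $2$s or with a $1$-type run that must consist of ordinary $1$s. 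So the runs group uniquely into pair-blocks $1^a 2^b$ or $(1')^a 2^b$ with $a \ge k$ (by (Pc)) and $b \ge 1$, possibly followed by a trailing $1^r$. Define $\psi$ by $1^a 2^b \mapsto (a+b, b)$, $(1')^a 2^b \mapsto (b, a+b)$, and trailing $1^r \mapsto (r)$; the difference condition $|c_{2i-1} - c_{2i}| = a \ge k$ is automatic.

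The step requiring the most care is the alternating-run argument: one must combine all four Pell conditions to conclude that every $2$-run is flanked by $1$-type runs, that the first run is a $1$-type, and that a final $1$-type run must consist of ordinary $1$s. Once this parsing is shown to be unambiguous, checking $\psi \circ \phi = \mathrm{id}$ and $\phi \circ \psi = \mathrm{id}$ is immediate, since the data $(a,b,\text{ordinary } 1 \text{ vs.\ } 1')$ is plainly equivalent to the triple $(j-\ell,\ell,\text{order of the pair})$, and the trailing parts correspond tautologically.
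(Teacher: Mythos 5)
Your construction is exactly the paper's bijection: each pair $(c_{2i-1},c_{2i})$ is sent to $1^{a-b}2^{b}$ or $(1')^{b-a}2^{a}$ according to which part is larger, a trailing odd part becomes a run of $1$s, and the inverse parses a Pell composition into blocks $1^{g}2^{h}$, $(1')^{g}2^{h}$, and a final $1^{f}$. Your verification of (Pa)--(Pd) and of the unambiguity of the run-parsing is, if anything, more detailed than the paper's, so the proposal is correct and takes essentially the same approach.
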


\begin{proof}
\textbf{Map from $\CA_{\ge k}(n)$:} Given a composition $c = (c_1, \ldots, c_t) \in \CA_{\ge k}(n)$, we map each pair $(c_{2i-1}, c_{2i})$ (and single $c_t$ if the length is odd) into a subsequence of parts from $\{1, 1', 2\}$ satisfying the conditions (Pa)--(Pd) with sum $c_{2i-1} + c_{2i}$ (or $c_t$).

For each $(c_{2i-1}, c_{2i}) = (a,b)$, the defining condition $| a - b| \ge k$ has two cases, with $a$ or $b$ the greater part.  Using superscripts to denote repetition of parts, make the replacement
\[ (a,b) \mapsto \begin{cases} (1^{a - b}, 2^b) & \text{if $a - b \ge k$}, \\ ((1')^{b-a}, 2^a) & \text{if $b - a \ge k$}. \end{cases} \]
Since $a - b + 2b = a + b$ and analogously for the second case, the pair $(a, b)$ and its image have the same sum.

If the composition length $t$ is odd, make the replacement \[c_t \mapsto 1^{c_t}\] which clearly preserves the sum.

The image of $c$ is then the ordered concatenation of images of $(c_{2i-1}, c_{2i})$ and, if the length is odd, the image of the last part $c_t$.  Since each map above preserves the sum of the pairs (and final singleton for odd-length $c$) and the conditions (Pa)--(Pd) are met, the image is in  $P_{\ge k}(n)$.

\textbf{Map from $P_{\ge k}(n)$:} Given $p \in P_{\ge k}(n)$, partition it into subsequences of the form $(1^g, 2^h)$ or $((1')^g,2^h)$ where $g \ge k$ by condition (Pc) with a possible final subsequence $1^f$ for any $f \ge 1$.  Make the substitutions
\begin{align*}
(1^g, 2^h) & \mapsto (g+h, h), \\
((1')^g,2^h) & \mapsto (h, g+h), \\
1^f & \mapsto f,
\end{align*}
which all preserve the sum.  In the first two cases, the absolute difference of the two parts is at least $k$.  The image of $p$ is then the ordered concatenation of images of these subsequences, and the image is in $\CA_{\ge k}(n)$.  

The maps are clearly inverses, establishing the bijection.
\end{proof}

Table \ref{tabcagreaterPbij} shows the bijection for $k = 2$ and $n = 6$.

\begin{table}[h]
\begin{center}
\begin{tabular}{l|l}
$\CA_{\ge 2}(6)$ & $P_{\ge 2}(6)$ \\ \hline
$6$        & $1^6$ \\
$51$      & $1^42$ \\
$42$      & $1122$ \\
$411$      & $11121$ \\
$312$    & $11211$ \\
$24$    & $1'1'22$ \\
$15$    & $(1')^42$ \\
$141$    & $1'1'1'21$ \\
$132$    & $1'1'211$ 
\end{tabular}
\end{center}
\caption{The bijection $\CA_{\ge 2}(6) \cong P_{\ge 2}(6)$ used in the proof of Theorem \ref{aclowerP}.} \label{tabcagreaterPbij}
\end{table}

Figure \ref{figcagreater} provides a visualization of the bijection of Theorem \ref{aclowerP} using the bargraph representation of a composition, where each part $c_i$ corresponds to a column of $c_i$ boxes.  Parts 1 correspond to squares atop the left-hand column in a pair of columns (separated with thick lines) while parts $1'$ correspond to squares atop the right-hand column in a pair.  The image is read top-to-bottom in each pair.  This helps explain conditions (Pb) and (Pd), especially.  Compare the authors' \cite[Figure 1]{ht22} for the case of parts from $\{1, 2\}$.

\begin{figure}[h]
\begin{center}
\setlength{\unitlength}{.5cm}
\begin{picture}(13,6)
\thicklines
\put(0,0){\line(0,1){5}}
\put(1,0){\line(0,1){5}}
\put(2,0){\line(0,1){2}}
\put(3,0){\line(0,1){3}}
\put(4,0){\line(0,1){3}}
\put(5,0){\line(0,1){1}}

\put(0,5){\line(1,0){1}}
\put(0,4){\line(1,0){1}}
\put(0,3){\line(1,0){1}}
\put(0,2){\line(1,0){2}}
\put(0,1){\line(1,0){5}}
\put(0,0){\line(1,0){5}}
\put(3,3){\line(1,0){1}}
\put(3,2){\line(1,0){1}}

\put(9,2){\line(0,1){3}}
\put(11,1){\line(0,1){2}}
\put(13,0){\line(0,1){1}}
{\linethickness{1mm}
\put(8,0){\line(0,1){5}}
\put(10,0){\line(0,1){2}}
\put(12,0){\line(0,1){3}}}

\put(8,5){\line(1,0){1}}
\put(8,4){\line(1,0){1}}
\put(8,3){\line(1,0){1}}
\put(8,2){\line(1,0){2}}
\put(8,1){\line(1,0){5}}
\put(8,0){\line(1,0){5}}
\put(11,3){\line(1,0){1}}
\put(11,2){\line(1,0){1}}

\put(8.3,4.3){1}
\put(8.3,3.3){1}
\put(8.3,2.3){1}
\put(8.8,1.3){2}
\put(8.8,0.3){2}
\put(11.3,2.3){$1'$}
\put(11.3,1.3){$1'$}
\put(10.8,0.3){2}
\put(12.3,0.3){1}

\end{picture}
\end{center}
\caption{Representations of $(5,2,1,3,1) \in \CA_{\ge 2}(12)$ and the corresponding $(1,1,1,2,2,1',1',2,1) \in P_{\ge 2}(12)$ by the bijection of Theorem \ref{aclowerP}.}  \label{figcagreater}
\end{figure}
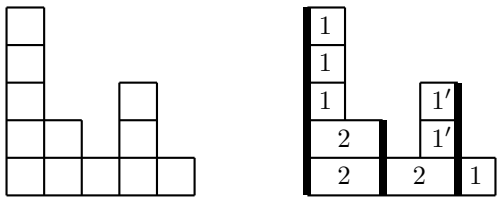

\begin{remark}
Composing the bijections of Theorem \ref{without-11-bij} and the $k = 1$ case of Theorem \ref{aclowerP} shows that $C_{\widehat{1,1}}(n) \cong CA_{\ge 1}(n)$.  The interested reader may want to detail the direct connection between these two sets of restricted compositions of $n$.
\end{remark}

\begin{remark}
With the correspondence between $ca_{\ge k}(n)$ and $p_{\ge k}(n)$ established by Theorem \ref{aclowerP}, one could establish the recurrence of Theorem \ref{gek} using the restricted Pell compositions $P_{\ge k}(n)$ rather than the generalized Carlitz--Arndt compositions $\CA_{\ge k}(n)$.  Having done this ourselves, we note that establishing 
\[p_{\ge k}(n) = p_{\ge k}(n-1) + p_{\ge k}(n-2) - p_{\ge k}(n-3) + 2p_{\ge k}(n-k-2)\]
combinatorially is comparable to the work involved in the proof of Theorem \ref{gek}.
\end{remark}

\section{Absolute differences bounded above}

A natural variation of the $\CA_{\ge k}(n)$ compositions of the previous section reverses the inequality.  In the general case, setting the condition $| c_i - c_{i+1} |  \le 1$ gives the gap-free compositions studied by Hitczenko and Knopfmacher \cite{hk05}.  Grabner and Knopfmacher considered the analogous gap-free partitions \cite{gk06}; Andrews calls these compact partitions \cite{a17} and traces the idea to Fine \cite{f88}.

\begin{definition} \label{caupperdef}
Given a positive integer $k$, let the generalized Carlitz--Arndt compositions $\CA_{\le k}(n)$ be the compositions of $n$ such that $| c_{2i-1} - c_{2i}| \le k$ for each $i$.  If the number of parts is odd, then there is no restriction on the last part.
\end{definition}

Table \ref{tabcaupperex} gives examples of $\CA_{\le k}(n)$ for $k = 1$ and $k = 2$.  Note that, of the examples shown, only $\CA_{\le 1}(4)$ is not the list of all compositions of the given $n$.  From the definition, it is clear that $\CA_{\le k}(n) \subseteq \CA_{\le k+1}(n)$ for all positive integers $k$.  

\begin{table}[h]
\begin{center}
\begin{tabular}{c|l|l}
$n$ & $\CA_{\le 1}(n)$ & $\CA_{\le 2}(n)$ \\ \hline
1 & 1 & 1\\
2 & 2, 11 & 2, 11 \\
3 & 3, 21, 12, 111 & 3, 21, 12, 111  \\
4 & 4, 22, 211, 121, 112, $1^4$  & 4, 31, 22, 211,13, 121, 112, $1^4$ 
\end{tabular}
\end{center}
\caption{The generalized Carlitz--Arndt compositions $\CA_{\le k}(n)$ for small $n$ and $k = 1, 2$.} \label{tabcaupperex}
\end{table}

Several counts $ca_{\le k}(n)$ are given in Table \ref{tablek} along with a recurrence signature for each $k$.  We note that, of these sequences, only the $k=1$ row is currently in the On-Line Encyclopedia of Integer Sequences \cite[A107383]{o} where it does not currently have a combinatorial interpretation.

\begin{table}[h]
\begin{center}
\begin{tabular}{c|rrrrrrrrrr|l}
$k \backslash n$ & 1 & 2 & 3 & 4 & 5 & 6 & 7 & 8 & 9 & 10 & recurrence signature \\ \hline
1 & 1 & 2 & 4 & 6 & 12 & 20 & 36 & 64 & 112 & 200 & $[0,2,2]$ \\
2 & 1 & 2 & 4 & 8 & 14 & 28 & 52 & 100 & 188 & 360 & $[0,2,2,2]$  \\
3 & 1 & 2 & 4 & 8 & 16 & 30 & 60 & 116 & 228 & 444 &  $[0,2,2,2,2]$ \\
4 & 1 & 2 & 4 & 8 & 16 & 32 & 62 & 124 & 244 & 484 & $[0,2,2,2,2,2]$
\end{tabular}
\end{center}
\caption{Sequences $ca_{\le k}(n,k)$ for small $n$ and $k$ with recurrence signatures.} \label{tablek}
\end{table}

Analogously to Theorem \ref{gflower} (with fewer details), we determine generating functions for $ca_{\le k}(n)$ which establish both the general recurrence relations suggested in Table \ref{tablek} and a family of higher degree recurrence relations with fewer nonzero terms.

\begin{thm} \label{gfupper}
Given a positive integer $k$, the generating function
\[ \sum_{n \ge 0} ca_{\le k}(n) x^n = \frac{1-x^2}{1-x-2x^2+2x^{k+3}} = \frac{1+x}{1-2x^2 - 2x^3 - \cdots - 2x^{k+2}}.\]
\end{thm}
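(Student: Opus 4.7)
The plan is to mirror the structure of the proof of Theorem \ref{gflower} but with the inequality reversed, and then derive the second expression by a single factorization.

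First, I would compute the generating function $G(x)$ for a single even-indexed pair $(c_{2i-1},c_{2i}) = (a,b)$ with $a,b \ge 1$ and $|a-b| \le k$. The cleanest route is inclusion-exclusion: start with all pairs of positive parts, which contribute $x^2/(1-x)^2$, and subtract the ``forbidden'' pairs with $|a-b| \ge k+1$. By symmetry there are two copies of the case $a - b \ge k+1$, giving
\[
\sum_{b \ge 1}\sum_{a \ge b+k+1} x^{a+b} \;=\; \frac{x^{k+3}}{(1-x)(1-x^2)}.
\]
A short calculation over the common denominator $(1-x)(1-x^2)$ yields
\[
G(x) \;=\; \frac{x^2 + x^3 - 2x^{k+3}}{(1-x)(1-x^2)}.
\]

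Next, I would assemble full compositions by parity of length, as in Theorem \ref{gflower}. Even-length compositions are concatenations of any number of pairs, so they contribute $E(x) = 1/(1-G(x))$. Using $(1-x)(1-x^2) = 1-x-x^2+x^3$, the denominator $1 - G(x)$ simplifies to $1 - x - 2x^2 + 2x^{k+3}$ over $(1-x)(1-x^2)$, giving
\[
E(x) \;=\; \frac{1-x-x^2+x^3}{1-x-2x^2+2x^{k+3}}.
\]
Odd-length compositions consist of such a concatenation followed by an unrestricted final part, producing a factor $x/(1-x)$; since $(1-x-x^2+x^3)/(1-x) = 1-x^2$, one gets $O(x) = x(1-x^2)/(1-x-2x^2+2x^{k+3})$. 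Adding $E(x) + O(x)$ and factoring $(1-x^2)$ out of the numerator (which appears via $(1-x)(1-x^2) + x(1-x^2) = 1-x^2$) yields the first claimed form.

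For the second form, I would verify the identity
\[
(1-x)\bigl(1 - 2x^2 - 2x^3 - \cdots - 2x^{k+2}\bigr) \;=\; 1 - x - 2x^2 + 2x^{k+3},
\]
which is a telescoping check: all intermediate $-2x^j$ terms for $3 \le j \le k+2$ cancel against the $+2x^j$ terms produced by multiplying by $-x$. Dividing numerator and denominator of $\frac{1-x^2}{1-x-2x^2+2x^{k+3}}$ by $(1-x)$ then produces the second expression.

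I do not expect any real obstacle: the proof is mostly a routine inclusion-exclusion for the pair generating function followed by algebraic bookkeeping. The only spot requiring care is making sure the ``no restriction on the last part'' clause for odd-length compositions is modeled correctly by appending $x/(1-x)$ (and not by restricting the last part in any way), and the telescoping identity at the very end — both of which are elementary.
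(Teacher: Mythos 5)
Your proposal is correct and follows essentially the same route as the paper: decompose by parity of length, compute the generating function for a single restricted pair, sum the geometric series, and strip the factor $(1-x)$ for the second form. The only (cosmetic) difference is that you obtain the pair generating function $\frac{x^2+x^3-2x^{k+3}}{(1-x)(1-x^2)}$ by subtracting the forbidden pairs with $|a-b|\ge k+1$ from all pairs, whereas the paper sums directly over the allowed pairs (equal parts, or a lesser part $j$ with a greater part in $\{j+1,\ldots,j+k\}$ in either order); both computations land on the same expression and the rest of the argument is identical.
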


\begin{proof}
Each pair $(c_{2i-1}, c_{2i})$ in a composition of $\CA_{\le k}(n)$ has either two equal parts $j$ or a lesser part $j$ and a greater part ranging from $j+1$ to $j+k$ in two possible orders, thus a generating function is
\[ \sum_{j \ge 1} x^{2j} + 2x^j(x^{j+1} + x^{j+2} + \cdots + x^{j+k}) = \frac{x^2+x^3-2x^{k+3}}{(1-x)(1-x^2)} \]
so that the compositions of even length have generating function
\[ \sum_{m \ge 0} \left(  \frac{x^2+x^3-2x^{k+3}}{(1-x)(1-x^2)} \right)^{\!m} = \frac{1-x-x^2+x^3}{1-x-2x^2+2x^{k+3}}\]
and the compositions of odd length have generating function
\[ \sum_{m \ge 0} \left(  \frac{x^2+x^3-2x^{k+3}}{(1-x)(1-x^2)} \right)^{\!m} \left( \frac{x}{1-x} \right) = \frac{x-x^3}{1-x-2x^2+2x^{k+3}}.\]
Combining these two parity-based generating functions gives the first rational expression of the theorem statement; the second comes from removing the factor $(1-x)$ from the numerator and denominator.
\end{proof}

\begin{remark}
Either of the two linear recurrences of Theorem \ref{gfupper} might be preferable depending on the context.  While
\[ ca_{\le k}(n) = 2ca_{\le k}(n-2) + 2ca_{\le k}(n-3) + \cdots + 2ca_{\le k}(n-k-2)\]
has the lower degree, 
\[ ca_{\le k}(n) = ca_{\le k}(n-1) + 2ca_{\le k}(n-2) - 2ca_{\le k}(n-k-3) \]
has fewer nonzero coefficients, a characteristic that can simplify combinatorial proofs, as we will see in Theorem \ref{lek}.
\end{remark}

Similar to the $P_{\ge k}(n)$ compositions of the previous section using parts $\{1, 1', 2\}$, we consider another type of composition that we will show is equinumerous with $\CA_{\le k}(n)$.

\begin{definition}
Let $Q_{\le k}(n)$ be the compositions with parts $\{1, 1', 2, 4, 6, \ldots \}$ whose sum is $n$ that satisfy
\begin{enumerate}
\item[(Qa)] Neither subsequence $(1, 1')$ or $(1', 1)$ occurs.
\item[(Qb)] Except at the end, each run of parts 1 or parts $1'$ has length at most $k$.
\item[(Qc)] The last part is either 1 or an even number (not $1'$).
\end{enumerate}
\end{definition}

Table \ref{qex} gives examples of $Q_{\le k}(n)$ for $k = 1$ and $k = 2$.  From the definition, it is clear that $Q_{\le k}(n) \subseteq Q_{\le k+1}(n)$ for all positive integers $k$.

\begin{table}[h]
\begin{center}
\begin{tabular}{c|l|l}
$n$ & $Q_{\le 1}(n)$ & $Q_{\le 2}(n)$\\ \hline
1 & 1 & 1 \\
2 & 2, 11 & 2, 11 \\
3 & 21, 12, $1'2$, 111 & 21, 12, $1'2$, 111  \\
4 & 4, 22, 211, 141, $1'41$, 1111  & 4, 22, 211, 141, $1'41$, 112, $1'1'2$, 1111 
\end{tabular}
\end{center}
\caption{The compositions $Q_{\le k}(n)$ for small $n$ and $k = 1, 2$.} \label{qex}
\end{table}

We connect the generalized Carlitz--Arndt compositions $\CA_{\le k}(n)$ and $Q_{\le k}(n)$.

\begin{thm} \label{acupperQ}
For each positive integer $k$, there is a bijection $\CA_{\le k}(n) \cong Q_{\le k}(n)$.
\end{thm}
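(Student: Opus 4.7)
The plan is to mirror the pair-by-pair encoding of Theorem~\ref{aclowerP}, swapping the roles of ``$\ge k$'' and ``$\le k$''. For a pair $(c_{2i-1}, c_{2i})$ in a composition of $\CA_{\le k}(n)$, write $j = \min(c_{2i-1}, c_{2i})$ and $d = |c_{2i-1} - c_{2i}|$, so $0 \le d \le k$ and the pair sums to $2j + d$. My forward map would encode the common amount $2j$ as a single even part and record the size and direction of the difference as a run of $1$'s or $1'$'s preceding that even part. Specifically, I would send $(j,j) \mapsto (2j)$, $(j, j+d) \mapsto (1^d, 2j)$ for $d \ge 1$, and $(j+d, j) \mapsto ((1')^d, 2j)$ for $d \ge 1$; any final singleton $c_t$ in an odd-length composition would map to $1^{c_t}$, exactly as in Theorem~\ref{aclowerP}. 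The image of the composition is the concatenation of these block images.

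I would then verify that the image satisfies (Qa)--(Qc). Because each pair's image ends in an even part, a $1$-run arising in one pair image is always separated from a $1'$-run in the next by an even part, giving (Qa). The $1$-run or $1'$-run inside a single block has length exactly $d \le k$, while the only run that can exceed $k$ is the terminal $1^{c_t}$ arising from an odd-length composition, which sits at the very end and is thus permitted by the ``except at the end'' clause of (Qb). An even-length image ends in the last pair's even part and an odd-length image ends in $1$, so (Qc) holds.

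For the inverse, I would scan $q \in Q_{\le k}(n)$ greedily from the left and decompose it into maximal blocks of four shapes: $(1^d, 2j)$ with $1 \le d \le k$, $((1')^d, 2j)$ with $1 \le d \le k$, a lone even part $(2j)$, or, only at the very end, a run $1^c$. A leading $1'$ must begin a block of the second shape because (Qa) forbids a $1'$ immediately followed by a $1$ and (Qc) forbids $q$ ending in $1'$, forcing the $1'$-run to terminate at an even part. A leading $1$ either belongs to a block of the first shape, when its run ends at an even part, or to the terminal $1^c$ block. A leading even part stands alone as a block. Decoding each block by the reverse of the forward replacements yields an element of $\CA_{\le k}(n)$.

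The main obstacle is to check that conditions (Qa)--(Qc), together with the run-length bound in (Qb), force exactly the greedy parsing above and preclude any fusion of a $1$-run from one pair with a $1'$-run of the next. Once these unambiguity checks are complete, the two maps are manifestly mutual inverses, establishing the bijection.
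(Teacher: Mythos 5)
Your proposal is correct and is essentially the paper's own proof: the same block encoding sending each Arndt pair with smaller part $j$ and difference $d$ to a run of $d$ ones (or primed ones) followed by the single even part $2j$, with a trailing $1^{c_t}$ for odd length, and the same unique parsing for the inverse. The only difference is that you swap which of $1$ versus $1'$ records the direction of the inequality, which is immaterial since (Qa) and (Qb) treat the two symmetrically and your terminal run still uses unprimed ones, so (Qc) is preserved.
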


\begin{proof}
\textbf{Map from $\CA_{\le k}(n)$:} Given a composition $c = (c_1, \ldots, c_t) \in \CA_{\le k}(n)$, we map each pair $(c_{2i-1}, c_{2i})$ (and single $c_t$ if the length is odd) into a subsequence of parts from $\{1, 1', 2, 4, \ldots\}$ satisfying the conditions (Qa)--(Qc) with sum $c_{2i-1} + c_{2i}$ (or $c_t$).

For each $(c_{2i-1}, c_{2i}) = (a,b)$, the defining condition $| a - b| \le k$ has three cases, with $a$ or $b$ the greater part or $a=b$.  Using superscripts to denote repetition of parts, make the replacement
\[ (a,b) \mapsto \begin{cases} (1^{a - b}, 2b) & \text{if $a > b$}, \\ ((1')^{b-a}, 2^a) & \text{if $a < b$}, \\ 2a & \text{if $a=b$}. \end{cases} \]
In each case, the pair $(a, b)$ and its image have the same sum.

If the composition length $t$ is odd, make the replacement \[c_t \mapsto 1^{c_t}\] which clearly preserves the sum.

The image of $c$ is then the ordered concatenation of images of $(c_{2i-1}, c_{2i})$ and, if the length is odd, the image of the last part $c_t$.  Since each map above preserves the sum of the pairs (and final singleton for odd-length $c$) and the conditions (Qa)--(Qc) are met, the image is in  $Q_{\le k}(n)$.

\textbf{Map from $Q_{\le k}(n)$:} Given $q \in Q_{\le k}(n)$, partition it into subsequences of the form $(1^g, 2h)$ or $((1')^g,2h)$, where $1 \le g \le k$ by condition (Qb), or $(2h)$, with a possible final subsequence $1^f$ for any $f \ge 1$.  Make the substitutions
\begin{align*}
(1^g, 2h) & \mapsto (g+h, h), \\
((1')^g,2h) & \mapsto (h, g+h), \\
(2h) & \mapsto (h,h) \\
1^f & \mapsto f,
\end{align*}
which all preserve the sum.  In the first two cases, the absolute difference of the two parts is at most $k$.  The image of $q$ is then the ordered concatenation of images of these subsequences, and the image is in $\CA_{\le k}(n)$.  The maps are clearly inverses.
\end{proof}

Table \ref{tabcaupperQbij} shows the bijection for $k = 1$ and $n = 5$.

\begin{table}[h]
\begin{center}
\begin{tabular}{l|l}
$\CA_{\le 1}(5)$ & $Q_{\le 1}(5)$ \\ \hline
$5$        & $1^5$ \\
$32$		& $14$ \\
$23$		& $1'4$ \\
$221$      & $41$ \\
$212$	& $1211$ \\
$2111$	& $122$ \\
$122$	& $1'211$ \\
$1211$	& $1'22$ \\
$113$	& $2111$ \\
$1121$	& $212$ \\
$1112$	& $21'2$ \\
$1^5$    & $221$ 
\end{tabular}
\end{center}
\caption{The bijection $\CA_{\le 1}(5) \cong Q_{\le 1}(5)$ used in the proof of Theorem \ref{acupperQ}.} \label{tabcaupperQbij}
\end{table}

Figure \ref{figcaupper} provides a visualization of the bijection of Theorem \ref{acupperQ} using the bargraph representation.  As in Figure \ref{figcagreater}, parts 1 are squares atop the left-hand column in a pair, parts $1'$ are squares atop the right-hand column in a pair, and the image is read top-to-bottom in each pair of columns.  Again, the graphic supports the defining conditions of $Q_{\le k}(n)$.

\begin{figure}[h]
\begin{center}
\setlength{\unitlength}{.5cm}
\begin{picture}(15,4)
\thicklines
\put(0,0){\line(0,1){1}}
\put(1,0){\line(0,1){2}}
\put(2,0){\line(0,1){4}}
\put(3,0){\line(0,1){4}}
\put(4,0){\line(0,1){3}}
\put(5,0){\line(0,1){1}}
\put(6,0){\line(0,1){4}}
\put(7,0){\line(0,1){4}}

\put(0,1){\line(1,0){7}}
\put(0,0){\line(1,0){7}}
\put(1,2){\line(1,0){1}}
\put(2,2){\line(1,0){2}}
\put(2,4){\line(1,0){1}}
\put(2,3){\line(1,0){2}}
\put(3,2){\line(1,0){1}}
\put(6,2){\line(1,0){1}}
\put(6,3){\line(1,0){1}}
\put(6,4){\line(1,0){1}}

\put(11,1){\line(0,1){1}}
\put(13,3){\line(0,1){1}}
\put(17,0){\line(0,1){4}}
{\linethickness{1mm}
\put(10,0){\line(0,1){1}}
\put(12,0){\line(0,1){4}}
\put(14,0){\line(0,1){3}}
\put(16,0){\line(0,1){4}}}

\put(10,1){\line(1,0){2}}
\put(10,0){\line(1,0){7}}
\put(11,2){\line(1,0){1}}
\put(12,3){\line(1,0){2}}
\put(12,4){\line(1,0){1}}
\put(14,1){\line(1,0){3}}
\put(16,2){\line(1,0){1}}
\put(16,3){\line(1,0){1}}
\put(16,4){\line(1,0){1}}

\put(10.8,0.3){2}
\put(12.3,3.3){1}
\put(12.8,1.3){6}
\put(14.8,0.3){2}
\put(11.3,1.3){$1'$}
\put(16.3,3.3){1}
\put(16.3,2.3){1}
\put(16.3,1.3){1}
\put(16.3,0.3){1}

\end{picture}
\end{center}
\caption{Representations of $(1,2,4,3,1,1,4) \in \CA_{\le 1}(16)$ and the corresponding $(1',2,1,6,2,1,1,1,1) \in Q_{\le 1}(16)$ by the bijection of Theorem \ref{acupperQ}.}  \label{figcaupper}
\end{figure}
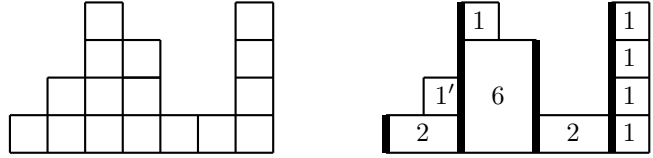

In the previous section, we established the $ca_{\ge k}(n)$ recurrence in Theorem \ref{aclowerP} directly from those generalized Carlitz--Arndt compositions rather than using the equinumerous compositions $P_{\ge k}(n)$.  Here, in contrast, we will use the $Q_{\le k}(n)$ compositions to establish combinatorially the recurrence for $ca_{\le k}(n)$.

\begin{thm} \label{lek}
For a given positive integer $k$, the compositions $\CA_{\le  k}(n)$ satisfy
\[ca_{\le k}(n) = ca_{\le k}(n-1) + 2ca_{\le k}(n-2) - 2ca_{\le k}(n-k-3)\]
with initial values $ca_{\le k}(n) = 2^{n-1}$ for $1\le n \le k+2$ and $ca_{\le k}(k+3) = 2^{k+2}-2$.
\end{thm}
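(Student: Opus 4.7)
The plan is to use Theorem \ref{acupperQ} to transfer the problem to $Q_{\le k}(n)$, where the block structure used in the proof of that bijection makes the combinatorics cleaner. The initial values follow by direct counting in $\CA_{\le k}(n)$: for $n \le k+2$ every pair $(c_{2i-1}, c_{2i})$ of positive parts summing to at most $k+2$ automatically satisfies $|c_{2i-1} - c_{2i}| \le k$, so $ca_{\le k}(n) = 2^{n-1}$; and for $n = k+3$ a forbidden pair $(a,b)$ with $|a-b| \ge k+1$ has sum at least $1+(k+2) = n$, forcing the whole composition to equal either $(1, k+2)$ or $(k+2,1)$, which gives $2^{k+2}-2$.

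For the recurrence I rewrite it as $q_{\le k}(n) + 2q_{\le k}(n-k-3) = q_{\le k}(n-1) + 2q_{\le k}(n-2)$ and build a bijection
\[ Q_{\le k}(n) \cup 2\, Q_{\le k}(n-k-3) \cong Q_{\le k}(n-1) \cup 2\, Q_{\le k}(n-2). \]
The easy half is that appending the part $1$ gives a bijection from $Q_{\le k}(n-1)$ onto the subset $A \subseteq Q_{\le k}(n)$ of compositions ending in 1: any $q' \in Q_{\le k}(n-1)$ ends in $1$ or an even part by (Qc), and in either case $q' + (1)$ still satisfies (Qa)–(Qc) because the new final run of 1's is terminal and hence unrestricted. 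This reduces the task to matching $B := Q_{\le k}(n) \setminus A$, the compositions ending in an even part, together with $2\,Q_{\le k}(n-k-3)$, to $2\,Q_{\le k}(n-2)$.

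The heart of the proof is a decomposition $B = B_4 \sqcup B_{2,e} \sqcup B_{2,1} \sqcup B_{2,1'}$ according to the final even part and its immediate predecessor: $B_4$ for compositions ending in an even part at least $4$; $B_{2,e}$ for ending in $2$ preceded by an even part (or by nothing); $B_{2,1}$ for ending in $(1^g,2)$; and $B_{2,1'}$ for ending in $((1')^g,2)$. In the last two cases $1 \le g \le k$, since the 1- or $1'$-run is non-terminal and so (Qb) applies. For the first copy of $Q_{\le k}(n-2)$ I delete the final 2 from $B_{2,e} \sqcup B_{2,1}$ and append $1^{k+1}$ to the first copy of $Q_{\le k}(n-k-3)$; for the second copy I subtract $2$ from the final even part of $B_4$, apply the block replacement $((1')^g,2) \mapsto 1^g$ to $B_{2,1'}$, and again append $1^{k+1}$ to the second copy of $Q_{\le k}(n-k-3)$. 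In both copies, the appended compositions fill precisely the $q' \in Q_{\le k}(n-2)$ whose terminal 1-run has length at least $k+1$, while the other two images fill its complement disjointly (one giving compositions ending in an even part, the other compositions ending in a 1-run of length at most $k$).

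The main obstacle I anticipate is pinning down the $B_{2,1'}$ step: the crucial point is that in $((1')^g, 2)$ terminating a $Q_{\le k}(n)$ composition and in $1^g$ terminating a $Q_{\le k}(n-2)$ composition, condition (Qa) forces the immediately preceding part (if any) to be even, so the prefixes live in the same set of "$Q_{\le k}(m)$ compositions that are empty or end in an even part" for the matching values of $m = n - g - 2$, and the block swap preserves sums modulo the required shift by $2$. Once this decomposition is fixed, verifying that the stated maps are inverses and that the images on each copy of $Q_{\le k}(n-2)$ form a disjoint partition is routine.
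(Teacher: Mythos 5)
Your proposal is correct and follows essentially the same route as the paper's proof: it transfers to $Q_{\le k}(n)$ via Theorem \ref{acupperQ}, splits off the compositions ending in $1$ to account for $q_{\le k}(n-1)$, and then matches the compositions ending in an even part together with $2Q_{\le k}(n-k-3)$ against $2Q_{\le k}(n-2)$ using exactly the same block operations (deleting a final $2$, subtracting $2$ from a final even part $\ge 4$, converting a terminal $((1')^g,2)$ to $1^g$, and appending $1^{k+1}$), distributed over the two copies in the same way. The paper phrases this as a single case analysis on the final run and part rather than naming the four subsets $B_4, B_{2,e}, B_{2,1}, B_{2,1'}$, but the content is identical.
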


\begin{proof} 
For the initial values, recall that $c(n)=2^{n-1}$.  The claim is that, for $1\le n \le k+2$, every composition of $n$ is in $\CA_{\le  k}(n)$.  This is true because the maximum difference between a consecutive pair of parts in a composition of $k+2$ occurs with $(k+1,1)$ and $(1,k+1)$, which have absolute difference $k$; for smaller $n$, the maximum absolute difference is less than $k$.  For $n = k+3$, exactly the compositions $(k+2,1)$ and $(1,k+2)$ are excluded from $\CA_{\le k}(k+3)$ so that $ca_{\le k}(k+3) = 2^{k+2}-2$.

For the recurrence relation, we use the fact that $ca_{\le k}(n) = q_{\le k}(n)$ from Theorem \ref{acupperQ} and work with the $Q_{\le k}(n)$ compositions instead.
Similar to the technique in the proof of Theorem \ref{k=1}, we establish a slightly stronger result based on the last part of the compositions in $Q_{\le k}(n)$.  Let $Q^1_{\le k}(n)$ be the compositions of $Q_{\le k}(n)$ with last part 1, similarly $Q^e_{\le k}(n)$ for an even last part.  We demonstrate bijections
\begin{gather}
Q^1_{\le k}(n) \cong Q_{\le k}(n-1), \label{last1} \\
Q^e_{\le k}(n) \cup 2Q_{\le k}(n-k-3) \cong 2Q_{\le k}(n-2). \label{laste}
\end{gather}
Since $Q^1_{\le k}(n) \cap Q^e_{\le k}(n) = \varnothing$ and $Q^1_{\le k}(n) \cup Q^e_{\le k}(n) = Q_{\le k}(n)$ by property (Qc), these will imply
\[Q_{\le k}(n) \cup 2Q_{\le k}(n-k-3) \cong Q_{\le k}(n-1) \cup 2Q_{\le k}(n-2)\]
from which the result follows.

\textbf{First map, from $Q^1_{\le k}(n)$:} Given a composition in $Q^1_{\le k}(n)$, let $(q_{t-1},1)$ be the last two parts.  Modify those parts by
\[ (q_{t-1},1) \mapsto q_{t-1}.\] Since there is no $(1',1)$ subsequence by (Qa), the resulting composition of $n-1$ ends in 1 or an even number, i.e., satisfies (Qc).  Removing the last part cannot invalidate (Qa) or (Qb), so the image is in $Q_{\le k}(n-1)$.

\textbf{First map, from $Q_{\le k}(n-1)$:} Given a composition in $Q_{\le k}(n-1)$ with last part $q_s$, modify it by
\[ q_s \mapsto (q_s, 1)\]
so that the image satisfies (Qc) automatically.  Because the original compositions cannot end in a part $1'$, the map does not introduce a $(1',1)$ subsequence, so the image satisfies (Qa).  Since (Qb) does not restrict a terminal run of parts 1, that condition is also maintained, so the image is in $Q^1_{\le k}(n)$.

It is clear that these maps are inverses, establishing \eqref{last1}.

\textbf{Second map, from $Q^e_{\le k}(n) \cup 2Q_{\le k}(n-k-3)$:} Given a composition in $Q^e_{\le k}(n)$, write it as $(c, s^u, t)$ where we know $t$ is even and $s$ is the penultimate run (both $c$ and $s^u$ can be empty).  In the image, keep $c$ unchanged and modify $(s^u, t)$ by the rule
\[ (s^u, t) \mapsto 
\begin{cases} (s^u) & \text{if $t=2$ and $s = 1$ or an even number,} \\
(1^u) & \text{if $t=2$ and $s = 1'$,} \\
(s^u, t-2) & \text{if $t>2$}.
\end{cases}\]
Each case gives a composition of $n-2$.  No subsequences $(1,1')$ or $(1',1)$ are introduced (note that the second case changes the entire penultimate run of parts $1'$ to $1$) and no run lengths are increased, so (Qa) and (Qb) hold for the image.  Also, the image satisfies (Qc) with the change when $t = 2$ and $s = 1'$, so the image is in $Q_{\le k}(n-2)$ (with duplication).  Note that the first two cases give images where any terminal run of parts $1$ has length at most $k$, as it came from an internal run of parts $1$ or $1'$ in a composition of $Q^e_{\le k}(n)$ satisfying (Qb).

Given a composition in each copy of $Q_{\le k}(n-k-3)$, append $k+1$ parts 1 at the end.  The defining conditions are maintained so that the images are in $Q_{\le k}(n-2)$ (with duplication) with terminal runs of parts $1$ having length at least $k+1$.

\textbf{Second map, from $2Q_{\le k}(n-2)$:} Given a composition in the first copy of $Q_{\le k}(n-2)$, write it as $(c, t^v)$.  In the image, keep $c$ unchanged and modify $t^v$ by the rule
\[ t^v \mapsto 
\begin{cases} (t^v,2) & \text{if $t=1$ with $v \le k$ or $t \ge 2$,} \\
& \quad \text{(image in $Q^e_{\le k}(n)$ ending $(t,2)$ with $t = 1$ or even),} \\
1^{v-k-1} & \text{if $t=1$ with $v \ge k+1$ \; (image in  $Q_{\le k}(n-k-3)$).}
\end{cases}\]
Appending a part 2 or decreasing a terminal run of parts 1 maintains (Qa)--(Qc), so the images are in the described subsets.

Given a composition in the second copy of $Q_{\le k}(n-2)$, write it as $(c, t^v)$.  In the image, keep $c$ unchanged and modify $t^v$ by the rule
\[ t^v \mapsto 
\begin{cases} ((1')^v,2) & \text{if $t=1$ with $v \le k$} \\
& \quad \text{(image in $Q^e_{\le k}(n)$ ending $(1',2)$),} \\
1^{v-k-1} & \text{if $t=1$ with $v \ge k+1$ \; (image in  $Q_{\le k}(n-k-3)$),} \\
(t^{v-1},t+2) & \text{if $t \ge 2$} \\
& \quad \text{(image in $Q^e_{\le k}(n)$, last part even at least 4).}
\end{cases}\]
Again, the images are clearly in the described subsets.

As explained in the notes, there is no duplication from $2Q_{\le k}(n-2)$ among the images in $Q^e_{\le k}(n)$.  Each composition in $Q_{\le k}(n-k-3)$ appears twice, once from each copy of $Q_{\le k}(n-2)$.

It is straightforward to show that these maps are inverses, establishing \eqref{laste} and completing the proof.
\end{proof}

\begin{remark}
Note that the initial values are clear for the $\CA_{\le k}(n)$ compositions but not obvious for the equinumerous $Q_{\le k}(n)$ compositions.  Proving the $ca_{\le k}(n)$ recurrence using the generalized Carlitz--Arndt compositions would be comparable to the proof of Theorem \ref{aclowerP}.
\end{remark}

Table \ref{tabqbij} shows the bijections for $k = 1$ and $n = 6$.

\begin{table}[h]
\begin{center}
\begin{tabular}{r|r||r|r}
$Q^1_{\le 1}(6)$ & $Q_{\le 1}(5)$ & $Q^e_{\le 1}(6) \cup 2Q_{\le 1}(2)$ & $2Q_{\le 1}(4)$   \\ \hline
$411$ & $41$ & $6$ & $4$ \\
$2211$ & $221$ & $42$ & $4$ \\
$2121$ & $212$ & $24$ & $22$ \\
$21'21$ & $21'2$ & $222$ & $22$ \\
$21^4$ & $21^3$ &  $1212$ & $121$\\
$141$ & $14$ & $121'2$ & $121$\\
$1221$ & $122$ & $1'212$ & $1'21$  \\
$12111$ & $1211$ & $1'21'2$ & $1'21$ \\ \cline{3-3}
$1'41$ & $1'4$ & $2$ & $211$ \\
$1'221$ & $1'22$ & $11$ & $1^4$ \\ \cline{3-3}
$1'21^3$ & $1'211$ & $2$ & $211$ \\ 
$1^6$ & $1^5$ & $11$ & $1^4$ \\
\end{tabular}
\end{center}
\caption{The bijections $Q^1_{\le 1}(6) \cong Q_{\le 1}(6)$ and $Q^e_{\le 1}(6) \cup 2Q_{\le 2}(2) \cong 2Q_{\le 1}(4)$ used in the proof of Theorem \ref{lek}.} \label{tabqbij}
\end{table}


\begin{thebibliography}{9}

\bibitem{a17}
G. E. Amdrews, The Bhargava--Adiga summation and partitions, \textit{J. Indian Math. Soc.} 84 (2017) 151--160. 

\bibitem{ae04}
G. E. Andrews, K. Eriksson, \textit{Integer Partitions}, Cambridge Univ. Press, Cambridge, 2004.

\bibitem{bc05}
E. A. Bender, E. R. Canfield, Locally restricted compositions I. Restricted adjacent differences, \textit{Electron. J. Combin.} 12 (2005) R27, 27 pp.

\bibitem{c76}
L. Carlitz, Restricted compositions, \textit{Fibonacci Quart.} 14 (1976) 254--264.

\bibitem{cr24}
D. F. Checa, J. L. Ram\'irez, Arndt compositions: A generating function approach, \textit{Integers} 24 (2024) A35, 18 pp.

\bibitem{ch07}
S. Corteel, P. Hitczenko, Generalizations of Carlitz compositions, \textit{J. Integer Seq.} 10 (2007) 07.8.8, 13 pp.

\bibitem{f88}
N. J. Fine, \textit{Basic Hypergeometric Series and Applications}, Math. Surveys and Monographs 27, Amer. Math. Soc., Providence, 1988.

\bibitem{gh02}
W. M. Y. Goh, P. Hitczenko, Average number of distinct part sizes in a random Carlitz composition, \textit{European J. Combin.} 23 (2002) 647--657.

\bibitem{gk06}
P. J. Grabner, A. Knopfmacher, Analysis of some new partition statistics, \textit{Ramanujan J.} 12 (2006) 439--454.

\bibitem{hm10}
S. Heubach, T. Mansour, \textit{Combinatorics of Compositions and Words}, CRC Press, Boca Raton, 2010.

\bibitem{hk05}
P. Hitczenko, A. Knopfmacher, Gap-free compositions and gap-free samples of geometric random distributions, \textit{Discrete Math.} 294 (2005) 225--239.

\bibitem{hoc}
B. Hopkins,  \textit{Hands-On Combinatorics:\ Building Colorful Trains to Manifest Pascal's Triangle, Fibonacci Numbers, and Much More}.  Classroom Resource Materials 74, MAA Press. Providence, 2025.

\bibitem{hout}
B. Hopkins, Classical Fibonacci compositions, \textit{Fibonacci Quart.} accepted doi:10.1080/00150517.2025.2556149.

\bibitem{hm25}
B. Hopkins, A. Munagi, Scaled Arndt compositions, preprint (2025) arXiv:2509.20521.

\bibitem{ht22}
B. Hopkins, A. Tangboonduangjit, Verifying and generalizing Arndt's compositions, \textit{Fibonacci Quart.} 60 (2022) 181--186.

\bibitem{ht23}
B. Hopkins, A. Tangboonduangjit, Arndt and De Morgan integer compositions, preprint (2023) arXiv:2307.12434.

\bibitem{k05}
B. L. Kheyfets, The number of part sizes of a given multiplicity in a random Carlitz composition, \textit{Adv. in Appl. Math.} 35 (2005) 335--354.

\bibitem{kp98}
A. Knopfmacher, H. Prodinger, On Carlitz compositions, \textit{European J. Combin.} 19 (1998) 579--589.

\bibitem{m93}
P. MacMahon, Memoir on the theory of the compositions of numbers.  \textit{Philos. Trans. Roy. Soc. London, Ser. A} 184 (1893) 835--901.

\bibitem{nacr25}
M. L. Nadji, M. Ahmia, D. F. Checa, J. L. Ram\'irez, Arndt compositions with restricted parts, palindromes, and colored variants, \textit{J. Integer Seq.} 28 (2025) 25.3.6, 20 pp.

\bibitem{o}
OEIS Foundation Inc., \textit{The On-Line Encyclopedia of Integer Sequences}, 2025, oeis.org.

\bibitem{p23}
H. Prodinger, Arndt--Carlitz compositions, arXiv:2312.05081.

\bibitem{s86}
P. Singh,  The so-called Fibonacci numbers in ancient and medieval India. \textit{Historia Math.} 12 (1985) 229--244.

\bibitem{w11}
H. S. Wilf, The distribution of run lengths in integer compositions, \textit{Electron. J. Combin.} 18 (2011) P23, 5 pp.

\end{thebibliography}
\end{document}